\documentclass[oneside,english]{amsart}
\usepackage[T1]{fontenc}
\usepackage[latin9]{inputenc}
\usepackage{amsthm}
\usepackage{amssymb}
\usepackage{esint}

\makeatletter
\numberwithin{equation}{section} 
\numberwithin{figure}{section} 
\theoremstyle{plain}
\theoremstyle{plain}
\newtheorem{thm}{Theorem}
  \theoremstyle{definition}
  \newtheorem{defn}[thm]{Definition}
  \theoremstyle{plain}
  \newtheorem{lem}[thm]{Lemma}
  \theoremstyle{plain}
  \newtheorem{prop}[thm]{Proposition}
  \theoremstyle{remark}
  \newtheorem*{rem*}{Remark}
  \theoremstyle{remark}
  \newtheorem{rem}[thm]{Remark}

\makeatother

\usepackage{babel}

\begin{document}
November 5, 2009, FM

\title[Symmetrization in Warped Products and Fiber Bundles]{Steiner and Schwarz Symmetrization in Warped Products and Fiber Bundles
with Density}

\maketitle
\begin{center}
Frank Morgan, Sean Howe, Nate Harman
\par\end{center}
\begin{abstract}
We provide very general symmetrization theorems in arbitrary dimension
and codimension, in products, warped products, and certain fiber bundles
such as lens spaces, including Steiner, Schwarz, and spherical symmetrization
and admitting density.
\end{abstract}

\section{\label{sec:Introduction}Introduction}

Symmetrization has always played a major role in geometry and analysis,
especially for the isoperimetric problem, but it is hard to provide
comprehensive statements and proofs. Steiner symmetrization in $\mathbb{R}^{N}$
replaces one-dimensional slices with centered intervals. Schwarz symmetrization
in $\mathbb{R}^{N}$ replaces $(N-1)$-dimensional slices with centered
balls. Generalized Schwarz symmetrization in $\mathbb{R}^{N}$ replaces
slices of some dimension $1\leq n\leq N-1$ with centered balls. These
results generalize readily to products $M\times\mathbb{R}^{n}$. Spherical
symmetrization in $\mathbb{R}^{N}$ replaces slices by spheres about
the origin with spherical caps.

Antonio Ros \cite[Sect. 3.2]{Ros-isoperimetric problem} gave a beautiful
proof of symmetrization in the context of manifolds with density.
Our first Proposition \ref{pro:sym for warped products} extends Ros
to warped products as asserted by Morgan \cite[Thm. 3.2]{Morgan-In Polytopes Small Balls about Some Vertex Minimize Perimeter }
and is general enough to include spherical symmetrization (Rmk. \ref{rem:SphericalSym(singular fibers)})
as well as Steiner and Schwarz symmetrization. Proposition \ref{pro:uniqueness for warped products}
treats the smooth case with an analysis of when equality holds after
Rosales \emph{et al.} \cite[Thm. 5.2]{Rosales et al - On the Isoperimetric Problem in Euclidean Space with Density}.
Propositions \ref{pro:sym for fiber bundles} and \ref{pro:smooth fiber bundles}
extend symmetrization to Riemannian fiber bundles with equidistant
fibers in which horizontal movement from fiber to fiber preserves
or scales volume. Some simple examples are lens spaces (fibered by
circles, Rmk. \ref{rem:LensSpaces}) as envisioned by Ros \cite[Thm. 2.11]{Ros-isoperimetric problem for lens spaces,Ros-isoperimetric problem},
similar Hopf circle fibrations of $\mathbb{S}^{2n+1}$ over $\mathbb{CP}{}^{n},$
the Hopf fibration of $\mathbb{S}^{7}$ by great $\mathbb{S}^{3}$s
and of $\mathbb{S}^{15}$ by great $\mathbb{S}^{7}$s. We were not,
however, able to complete the proof by symmetrization envisioned by
Vincent Bayle (private communication) to prove the still open conjecture
that in $\mathbb{R}^{N}$ with a smooth, radial, log-convex density,
balls about the origin are isoperimetric \cite[Conj. 3.12]{Rosales et al - On the Isoperimetric Problem in Euclidean Space with Density},
because horizontal movement from fiber to fiber does not preserve
or scale volume.

Standard references on symmetrization are provided by Burago and Zalgaller
\cite[Sect. 9.2]{Burago and Zalgaller- Geometric Inequalities} and
Chavel \cite[Sect. 6]{Chavel-Riemannian Geometry}. Gromov \cite[Sect. 9.4]{Gromov- isoperimetry of waists and concentration of maps}
after \cite[5.A]{Gromov Spectral} provides some sweeping remarks
and generalizations, including most of our results.

\subsection*{Acknowledgments}

This work began with the Williams College, National Science Foundation
SMALL undergraduate research Geometry Group at the University of Granada,
Spain, summer 2009. We would like to thank Vincent Bayle, Antonio
Cañete, Alexander Díaz, Rafael López, Manuel Ritoré, Antonio Ros,
César Rosales, and David Thompson for help and inspiration. We acknowledge
partial support by the National Science Foundation (grants to Morgan
and to the SMALL REU), the Spanish Ministerio de Educación y Ciencia
(grant to Ritoré, Rosales, \emph{et al.}), the University of Granada,
and Williams College.

\section{\label{sec:Symmetrization}Symmetrization}

A convenient general definition of perimeter in a metric space with
density is provided by Minkowski content or perimeter:
\begin{defn}
The \emph{Minkowski perimeter }of a region \emph{$R$} is the lower
right derivative \[
\liminf_{\Delta r\rightarrow0^{+}}\frac{\Delta V}{\Delta r}\]
for $r$ enlargements. In a Riemannian manifold with continuous metric
and density, the right limit $dV/dr$ exists and agrees with the usual
definition of perimeter as long as the boundary of $R$ is rectifiable
(see \cite[Thm. 3.2.39]{Federer-GMT}). We will use the following
routine lemma ($f$ in the source is the negative of our $f$.)\end{defn}
\begin{lem}
\label{lem:franks lemma}\emph{(cf.} \cite[Lem. 2.4]{Morgan- Isoperimetric estimates on products}\emph{)}
Let $f$, $h$ be real-valued continuous functions on $\left[a,b\right]$.
Suppose that the upper right derivative of $f$ and the right derivative
of h satisfy \[
\limsup_{\Delta x\rightarrow0^{+}}\frac{f(x+\Delta x)-f(x)}{\Delta x}\leq\lim_{\Delta x\rightarrow0^{+}}\frac{h(x+\Delta x)-h(x)}{\Delta x}.\]

\noindent Then $f(b)-f(a)\leq h(b)-h(a)$.\end{lem}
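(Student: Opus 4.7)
The plan is to reduce the claim to the one-sided mean-value/monotonicity principle: a continuous function with nonpositive upper right Dini derivative is nonincreasing. Setting $g=f-h$, the hypothesis gives
\[
\limsup_{\Delta x\to 0^+}\frac{g(x+\Delta x)-g(x)}{\Delta x}\le 0,
\]
since the $\limsup$ of a difference with a convergent sequence splits. The desired conclusion $f(b)-f(a)\le h(b)-h(a)$ is exactly $g(b)\le g(a)$, so it suffices to show that $g$ is nonincreasing on $[a,b]$.

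The main step is the standard $\varepsilon$-perturbation argument. Fix $\varepsilon>0$ and define
\[
S_\varepsilon=\{x\in[a,b]\mid g(x)-g(a)\le \varepsilon(x-a)\}.
\]
By continuity, $S_\varepsilon$ is closed and contains $a$. Let $x_0=\sup S_\varepsilon$, so $x_0\in S_\varepsilon$. I would argue by contradiction that $x_0=b$: if $x_0<b$, the Dini hypothesis at $x_0$ gives a $\delta>0$ so that
\[
g(x_0+\Delta x)-g(x_0)<\varepsilon\,\Delta x\qquad\text{for all }0<\Delta x<\delta,
\]
and adding this to the inequality defining $x_0\in S_\varepsilon$ yields $x_0+\Delta x\in S_\varepsilon$ for such $\Delta x$, contradicting the choice of $x_0$. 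Therefore $b\in S_\varepsilon$, i.e., $g(b)-g(a)\le\varepsilon(b-a)$. Letting $\varepsilon\to 0^+$ gives $g(b)\le g(a)$, which is the lemma.

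To get the full monotonicity statement (needed to apply this reasoning on any subinterval, not just $[a,b]$), the same argument applied on $[c,d]\subset[a,b]$ shows $g(d)\le g(c)$ whenever $c<d$, but this generality is not required for the stated conclusion.

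The only real point of care is handling the possibility that the upper right derivative of $f$ is $-\infty$ or that $f$ and $h$ are not differentiable in the classical sense: both are absorbed automatically because the $\limsup$ inequality above is the only regularity invoked. I do not anticipate a genuine obstacle; the argument is a routine one-sided Dini-derivative monotonicity principle, and the $\varepsilon$-slack is the standard device to circumvent the lack of a classical mean value theorem in this setting.
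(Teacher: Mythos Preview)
Your argument is correct. The paper itself does not prove this lemma: it labels it ``routine'' and cites Morgan's earlier paper for the statement, so there is no proof in the text to compare against. Your reduction to $g=f-h$ followed by the $\varepsilon$-slack supremum argument is precisely the standard one-sided Dini-derivative monotonicity principle one would expect here, and it is carried out cleanly.

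One minor caveat worth flagging: the identity $\limsup_{\Delta x\to 0^+}(a(\Delta x)-b(\Delta x))=\limsup a(\Delta x)-\lim b(\Delta x)$ that you invoke requires the right derivative of $h$ to be finite at each point (if it were $+\infty$ and the upper right derivative of $f$ were also $+\infty$, the difference would be indeterminate). This is implicit in the lemma's intended use in the paper---where $h$ is a fiber volume as a function of enlargement radius---and causes no genuine trouble, but you might note the assumption explicitly or else run the $\varepsilon$-argument directly on the set $\{x:f(x)-f(a)\le h(x)-h(a)+\varepsilon(x-a)\}$, which sidesteps the issue entirely.
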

\begin{prop}
\label{pro:sym for warped products}\textbf{\emph{Symmetrization for
warped products.}} Let $B,F$ be smooth, complete Riemannian manifolds.
Consider a warped product $B\times_{g}F$ with continuous metric $ds^{2}=db^{2}+g(b)^{2}dt^{2}$
and continuous product density $\Phi(b)\cdot\Psi(t)$. Suppose that
for some $p\in F,$ geodesic balls about $p$ are isoperimetric. Let
$R$ be a region of finite (weighted) perimeter. Then the Schwarz
symmetrization $\mathrm{sym}(R)$, obtained by replacing the slice
in each fiber by a ball about $p$ of the same (weighted) volume,
has the same volume and no greater perimeter than $R$.\end{prop}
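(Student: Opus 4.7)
My plan is to prove volume preservation by Fubini and then obtain the perimeter bound from the enlargement comparison $V((\mathrm{sym}(R))_r) \leq V(R_r)$ for all $r \geq 0$, which I would establish via Lemma~\ref{lem:franks lemma} together with a fiberwise application of the isoperimetric hypothesis.

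For volume preservation, with $n = \dim F$ the warped product volume element is $g(b)^n \Phi(b)\Psi(t)\,db\,dt$, so Fubini yields
\[
V(R) = \int_B \Phi(b)\,g(b)^n\!\left(\int_{R_b} \Psi\,dt\right)db,
\]
and the inner $\Psi$-integral is unchanged when $R_b$ is replaced by a geodesic ball about $p$ of the same $\Psi$-volume. Granting the enlargement inequality $V((\mathrm{sym}(R))_r) \leq V(R_r)$, subtracting $V(R) = V(\mathrm{sym}(R))$, dividing by $r$, and taking $\liminf$ then yields $P(\mathrm{sym}(R)) \leq P(R)$ straight from the Minkowski definition of perimeter.

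To obtain the enlargement inequality I would apply Lemma~\ref{lem:franks lemma} to $f(r) = V((\mathrm{sym}(R))_r)$ and $h(r) = V(R_r)$, which coincide at $r=0$; the hypothesis of the lemma then reduces to a pointwise derivative comparison at every $r \geq 0$. I would establish that via a fiber-slice argument: because the $F$-isometries fixing $p$ extend to isometries of $B \times_g F$, each slice $((\mathrm{sym}(R))_r)_b$ is itself a geodesic ball about $p$ in $F$, whereas $(R_r)_b$ contains, for every $b_0$ with $d_B(b,b_0) \leq r$, the fiber-$\Psi$-enlargement of $R_{b_0}$ by the corresponding residual warped distance. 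A fiberwise use of Lemma~\ref{lem:franks lemma} promotes the hypothesis that balls about $p$ are $F$-isoperimetric into the statement that each such fiber-enlargement of $R_{b_0}$ has $\Psi$-volume at least that of the corresponding enlargement of $\mathrm{sym}(R)_{b_0}$; since the latter enlargements are nested balls about $p$, their union over admissible $b_0$ is itself a ball containing $((\mathrm{sym}(R))_r)_b$, and integrating the resulting slice comparison against $\Phi(b)g(b)^n\,db$ closes the argument.

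The main obstacle will be making the ``residual warped distance'' decomposition of $(R_r)_b$ precise, since warped-product geodesics are not of the form horizontal-then-vertical and the slice $(R_r)_b$ is not literally a union of fiber-enlargements of the nearby $R_{b_0}$. I would handle this at the infinitesimal level exposed by Lemma~\ref{lem:franks lemma}: to first order in $r$ the metric $db^2 + g(b)^2\,dt^2$ splits orthogonally, so the derivative of each enlargement volume decomposes into a base contribution depending only on $b \mapsto \int_{R_b} \Psi\,dt$ (which is preserved under symmetrization by construction) plus a fiber contribution governed by the isoperimetric hypothesis applied with the scaled fiber metric $g(b)^2\,dt^2$ and density $\Psi$. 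This is essentially the Ros argument \cite[Sec.~3.2]{Ros-isoperimetric problem} transferred to the warped setting, and the warping factor $g(b)^n$ enters identically on both sides of the slice inequality and so causes no further difficulty.
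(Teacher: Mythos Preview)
Your overall architecture---Fubini for volume, then the enlargement inequality $|(\mathrm{sym}(R))^r|\le|R^r|$ via a fiber-slice comparison using Lemma~\ref{lem:franks lemma} on the fiber---is exactly the paper's approach. But you have manufactured an obstacle that does not exist, and your proposed infinitesimal workaround is both unnecessary and, as written, not a proof.

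The ``residual warped distance'' decomposition is \emph{exact}, not merely first-order. The paper's key geometric observation is this: in a warped product $B\times_g F$, the distance from $(b_0,t_0)$ to $(b,t)$ depends only on $b_0$, $b$, and $d_F(t_0,t)$, because the $F$-projection of a minimizing path is a minimizing path in $F$ and the total length is a function of the $F$-length alone. Consequently, for any subset $C\subset F$,
\[
(\{b_0\}\times C)^r\cap(\{b\}\times F)=\{b\}\times C_{r'}
\]
for some $r'=r'(b_0,b,r)$ \emph{independent of $C$}. So the slice $(R^r)_b$ literally equals $\bigcup_{b_0}(R_{b_0})_{r'(b_0,b,r)}$; no first-order approximation is needed. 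With this in hand, your fiberwise Lemma~\ref{lem:franks lemma} argument gives $|(\mathrm{sym}(R)_{b_0})_{r'}|\le|(R_{b_0})_{r'}|$ for each $b_0$, hence the ball $(\mathrm{sym}(R)_{b_0})_{r'}$ is contained in $\mathrm{sym}\bigl((R_{b_0})_{r'}\bigr)$, and taking unions over $b_0$ and then over $b$ yields $(\mathrm{sym}(R))^r\subseteq\mathrm{sym}(R^r)$, so $|(\mathrm{sym}(R))^r|\le|R^r|$ directly.

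Two smaller points. First, your outer invocation of Lemma~\ref{lem:franks lemma} on $f(r)=V((\mathrm{sym}(R))_r)$ and $h(r)=V(R_r)$ is circular: the derivative hypothesis there is a perimeter comparison for every $r$-enlargement, which is at least as hard as what you are proving. Drop it; the fiber-slice argument already gives the volume inequality for every $r$. Second, your claim that $((\mathrm{sym}(R))_r)_b$ is a ball about $p$ does not follow from isometry-invariance alone (invariance allows annuli); it follows from the exact slice formula above, since a union of concentric balls is a ball.
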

\begin{rem*}
Although typically $F$ is $\mathbb{R}^{n}$ or $\mathbb{S}^{n},$
there are many other possibilities such as the paraboloid $\{z=x^{2}+y^{2}\}$
\cite[Thms. 5, 8]{Benjamini}, \cite[Thm. 3.1(A)]{MHH}. Also balls
about the origin may be replaced by half-planes $\left\{ x_{n}\leq c\right\} $
(for $\mathbb{R}^{n-1}$$\times\mathbb{R}^{+}$ as well as $\mathbb{R}^{n}$)
when these have finite weighted volume. If $F$ is a space of revolution
about $p$, for all balls about $p$ to be isoperimetric or even stationary
(constant generalized mean curvature \cite[§18.3]{Morgan - GMT}),
the density $\Psi$ must be rotationally symmetric; for half-planes,
$\Psi$ must be a function of $x_{1},x_{2},...x_{n-1}$ times a function
of $x_{n}.$ \end{rem*}
\begin{proof}
The preservation of volume is just Fubini's theorem for warped products.

For small $r$, denote $r$-enlargements in $B\times_{g}F$ by a superscript
$r$ and $r$-enlargements in fibers by a subscript $r$. Consider
a slice $\left\{ b_{0}\right\} \times C$ $=R(b_{0})$ of $R$ and
a ball about the origin $\left\{ b_{0}\right\} \times D$ in the same
fiber of the same weighted volume. For general $b$, consider slices
$(\{b_{0}\}\times C)^{r}(b)$ of enlargements $(\left\{ b_{0}\right\} \times C)^{r}$
of $\left\{ b_{0}\right\} \times C$ and similarly slices $(\left\{ b_{0}\right\} \times D)^{r}(b)$
of enlargements $(\left\{ b_{0}\right\} \times D)^{r}$ of $\{b_{0}\}\times D$.
If $C$ were a single point, then $(\{b_{0}\}\times C)^{r}(b)=\left\{ b\right\} \times C_{r'}$
for some $r'$ independent of C, because the projection in F of a
shortest path $\gamma$ from $\left\{ b_{0}\right\} \times C$ to
a point in $\{b\}\times F$ is a shortest path $\gamma_{1}$ in $F$
and the length of $\gamma$ depends only on the length of $\gamma_{1}.$
Hence for any C, $(\{b_{0}\}\times C)^{r}(b)=\left\{ b\right\} \times C_{r'}$
and $(\left\{ b_{0}\right\} \times D)^{r}(b)=\left\{ b\right\} \times D_{r'}$
for the same $r'$. Because the fiber density $\Psi(t)$ is independent
of $b$, $\left\{ b\right\} \times C$ and $\left\{ b\right\} \times D$
have the same weighted volume. Since every $\left\{ b\right\} \times D_{s}$
is isoperimetric for given volume, the lower right derivative $dV/dr$
for the family $C_{r}$ is at least as great as the right derivative
$dV/dr$ for the family $D_{r}$, and hence the upper right derivative
$dr/dV$ for the family $C_{r}$ is no greater than the right derivative
$dr/dV$ for the family $D_{r}$. Consequently by Lemma \ref{lem:franks lemma},
if $C_{r_{1}}$ and $D_{r_{2}}$ have the same volume, $r_{1}\leq r_{2}$
; conversely, when $r_{1}=r_{2}=r'$, the volumes satisfy

\[
|\left\{ b\right\} \times D_{r'}|\leq|\left\{ b\right\} \times C_{r'}|.\]
Therefore \[
(\left\{ b_{0}\right\} \times D)^{r}(b)\subseteq\mathrm{sym}((\{b_{0}\}\times C)^{r}(b)).\]
Since this holds for all $b$, \[
(\left\{ b_{0}\right\} \times D)^{r}\subseteq\mathrm{sym}((\{b_{0}\}\times C)^{r}).\]
Since this holds for all $b_{0}$,

\[
(\mathrm{sym}(R))^{r}=\bigcup_{b_{0}}(\left\{ b_{0}\right\} \times D)^{r}\subseteq\mathrm{\bigcup_{b_{0}}sym}((\{b_{0}\}\times C)^{r})\subseteq\mathrm{sym}(R^{r}).\]
Consequently,\[
|(\mathrm{sym}(R))^{r}|\leq|R^{r}|\]
and $sym(R)$ has no more perimeter than $R$, as desired.\end{proof}
\begin{rem}
\textbf{\label{rem:SphericalSym(singular fibers)}Spherical symmetrization.}
In Proposition \ref{pro:sym for warped products}, at least for regions
of finite volume, one may allow singular fibers as long as the union
$S$ of such fibers has codimension 1 measure 0. An important example
is viewing $\mathbb{R}^{n+1}$ as the warped product $\{b\geq0\}\times_{b^{n}}\mathbb{S}^{n}$
with singular fiber $\{b=0\},$ yielding so-called \emph{spherical
symmetrization,} using spherical caps to replace slices by spheres
about the origin. To prove this generalization of Proposition \ref{pro:sym for warped products},
suppose that there were a counterexample. Then its restriction to
the complement of an appropriate small $r$-neighborhood of the singular
set $S$ would be a counterexample to the proof of Proposition \ref{pro:sym for fiber bundles}.
Note that the symmetrization of the restriction is just the restriction
of the symmetrization. Additional perimeter introduced by truncation
is negligible for most small $r$ by the finite volume hypothesis
\cite[§4.11]{Morgan - GMT}.
\end{rem}
Proposition \ref{pro:uniqueness for warped products} provides more
general symmetrization with uniqueness for regions (typically isoperimetric
regions) which satisfy certain smoothness hypotheses, as in the proof
by Rosales \emph{et al.} \cite[Thm. 5.2]{Rosales et al - On the Isoperimetric Problem in Euclidean Space with Density}
that in $\mathbb{R}^{n}$ with density $e^{r^{2}}$, balls about the
origin uniquely minimize perimeter for given volume. Chlebík \emph{et
al.} \cite{Chlebik et al-The perimeter inequality under Steiner symmetrization}
provide in Euclidean space an analysis of uniqueness without such
smoothness hypotheses. Proposition \ref{pro:uniqueness for warped products}
does not depend on Proposition \ref{pro:sym for warped products}.
\begin{prop}
\textbf{\emph{Smooth case with uniqueness}}.\label{pro:uniqueness for warped products}
Let $B,F$ be smooth Riemannian manifolds. Consider a smooth warped
product $B\times_{g}F$ with metric $ds^{2}=db^{2}+g(b)^{2}dt^{2}$
and product density $\Phi(b)\cdot\Psi(t)$. Suppose that for some
$p\in F,$ geodesic balls about $p$ are isoperimetric. Let $R$ be
a measurable set in $B\times_{g}F$. Suppose that its topological
boundary $\partial R$ meets almost every fiber smoothly (or not at
all). Let $R'$ denote its Schwarz symmetrization. Suppose that $\partial R'$
also meets almost every fiber smoothly and that its intersection with
other fibers contributes nothing to the area of $\partial R'$. Then
$R'$ has the same volume and no greater perimeter than $R$. If they
have the same perimeter and balls about the origin are uniquely isoperimetric
in $F$ (up to measure 0), then $R=R'$ up to a set of measure zero.\end{prop}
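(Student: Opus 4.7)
The plan is to compare $P(R)$ and $P(R')$ fiber by fiber over $B$ by combining the coarea formula with a Jensen-type inequality: the fiberwise isoperimetric hypothesis controls the slice perimeters, and the rotational symmetry of the slices of $R'$ makes the Jensen step sharp on $R'$.

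\emph{Slice data, coarea, Jensen.} For each $b$, let $v(b)$ and $\sigma(b)$ denote the weighted fiber-volume and weighted fiber-perimeter of $R(b):=R\cap F_b$, with analogues $v'(b),\sigma'(b)$ for $R'$. Fubini gives $v\equiv v'$, and the fiber isoperimetric hypothesis gives $\sigma(b)\geq\sigma'(b)$ almost everywhere, with equality forcing $R(b)$ to be a ball about $p$ (under the uniqueness hypothesis in $F$). On the transverse-to-fibers part of $\partial R$, which is almost all of it by the smoothness hypothesis, decompose the outer unit normal as $N=N^B+N^F$ with $|N^B|^2+|N^F|^2=1$. The coarea formula applied to $\pi_B|_{\partial R}$ yields
\[
P(R)=\int_B\Phi(b)\int_{\partial_b R}\frac{\Psi(t)}{|N^F|}\,dA_{\mathrm{fiber}}(t)\,db,
\]
and similarly for $P(R')$. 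Using $1/|N^F|=\sqrt{1+(N^B/|N^F|)^2}$ and convexity of $x\mapsto\sqrt{1+x^2}$, Jensen's inequality on the probability measure $\Psi\,dA_{\mathrm{fiber}}/\sigma(b)$ gives
\[
\int_{\partial_b R}\frac{\Psi}{|N^F|}\,dA_{\mathrm{fiber}}\geq\sqrt{\sigma(b)^2+T_R(b)^2},\qquad T_R(b):=\int_{\partial_b R}\frac{N^B}{|N^F|}\,\Psi\,dA_{\mathrm{fiber}}.
\]

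\emph{Matching slopes.} A Reynolds transport calculation shows that $v'(b)$ equals $T_R(b)$ plus a warping term that depends only on $v(b)$ and $g(b)$, arising from differentiating the fiber volume form $g(b)^{\dim F}dV_F$ in $b$. Since $v\equiv v'$, this forces $T_R(b)=T_{R'}(b)$ almost everywhere. For the symmetrized $R'$, each slice $\partial_b R'$ is a geodesic sphere about $p$, and the rotational structure of this sphere forces $N^B/|N^F|$ to be constant along it, so Jensen is an equality for $R'$. Monotonicity of $\sqrt{x^2+c^2}$ in $x\geq 0$ combined with $\sigma\geq\sigma'$ then yields $P(R)\geq P(R')$ after integrating in $b$.

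\emph{Uniqueness and obstacle.} At equality, Jensen must saturate on almost every slice (so $N^B/|N^F|$ is constant along $\partial_b R$ a.e.) and $\sigma(b)=\sigma'(b)$ for a.e.\ $b$; the latter, via the uniqueness hypothesis in $F$, forces $R(b)$ to be a ball about $p$ for a.e.\ $b$, hence $R=R'$ up to measure zero. I expect the main difficulty to be the Reynolds step: cleanly separating the warping contribution from the boundary-motion integral, and verifying that the latter reduces exactly to $T_R(b)$ under the warped coarea normalization (including sign conventions tying the outward $B$-component $N^B$ to the fiber-normal velocity of $\partial_b R$). A secondary technicality is handling the part of $\partial R$ where $|N^F|=0$, which is controlled by the smoothness and negligibility hypotheses in the statement.
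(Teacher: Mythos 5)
Your proposal is correct and follows essentially the same route as the paper's proof: a fiberwise coarea decomposition with $1/|N^{F}|=\sqrt{1+v^{2}}$, Jensen's inequality for the convex function $\sqrt{1+x^{2}}$, matching of the flux terms $T_{R}=T_{R'}$ from equality of slice volumes (the paper's equation (1)), sharpness of Jensen on the rotationally symmetric $R'$, and the same endgame for uniqueness. Your unnormalized form $\sqrt{\sigma^{2}+T^{2}}$ with monotonicity in $\sigma$ is just an algebraic repackaging of the paper's normalized inequality involving the area ratio $\rho=\sigma'/\sigma$ and the bound $h(\rho x)\geq\rho h(x)$.
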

\begin{proof}
The preservation of volume is just Fubini's theorem for warped products.

Let $B_{0}$ be the set of points $b$ in $B$ for which $\partial R$
and $\partial R'$ are both smoothly transverse to the fiber over
$b$. By hypothesis, almost every fiber meets $\partial R$ and $\partial R'$
smoothly. By Sard's Theorem, almost every fiber meets $\partial R$
and $\partial R'$ transversely. Hence almost all points of $B$ lie
in $B_{0}.$ For now we consider $b\in B_{0}.$ Let $0\leq\theta<\pi/2$
denote the angle that $\partial R$ makes with the horizontal; at
each point of $\partial R$ let $v$ be the horizontal vector in that
direction with magnitude $\tan\theta.$ (If $\partial R$ is locally
the graph of a function $f:B\rightarrow F$, then $v=\nabla f$, and
the component of $v$ in any direction gives the rate of change of
$f$ in that direction.) The analogous $v'$ for $\partial R'$ has
constant magnitude in each fiber. Because the metric $g$ depends
only on $b$ and the density is a product density, horizontal movement
just scales volume. If we vary b, any additional change in volume
is due to $v.$ Since corresponding slices have the same volumes,
these changes must be the same in $R_{b}$ and $R'_{b}$:

\[
\mbox{(1)}\int_{\partial R_{b}}\pm v=\int_{\partial R'_{b}}v',\]
where the $\ensuremath{\pm}$ sign depends on the local orientation
of $\partial R_{b}$. The element of area $dA$ of $\partial R$ satisfies

\[
\mbox{(2) }dA=\int_{\partial R{}_{b}}\sqrt{1+v^{2}}\ db.\]
By Jensen's Theorem and the convexity of the function $h(x)=\sqrt{1+x^{2}},$

\[
\mbox{(3) }\hat{\int}_{\partial R_{b}}\sqrt{1+v^{2}}\geq\sqrt{1+\left(\hat{\int}_{\partial R_{b}}|v|\right)^{2}},\]
where the caret over the integral sign indicates normalization by
the measure of the domain of integration. Let $\rho$ denote the ratio
of the areas of $\partial R'_{b}$ and $\partial R_{b}$. By hypothesis
$\rho\leq1.$ By (1),

\[
\mbox{(4) }\sqrt{1+\left(\hat{\int}_{\partial R_{b}}|v|\right)^{2}}\geq\sqrt{1+\left(\rho\hat{\int}_{\partial R'_{b}}|v'|\right)^{2}}\geq\rho\sqrt{1+\left(\hat{\int}_{\partial R'_{b}}|v'|\right)^{2}},\]
because $h(x)=\sqrt{1+x^{2}}$ satisfies $h(\rho x)\geq\rho h(x)$
for any $0\leq\rho\leq1$, with equality only if $\rho=1$. Since
$|v'|$ is constant on $\partial R'_{b},$ 

\[
\mbox{(5) }\sqrt{1+\left(\hat{\int}_{\partial R'_{b}}|v'|\right)^{2}}=\hat{\int}_{\partial R'_{b}}\sqrt{1+v'^{2}}.\]
Finally,\[
\mbox{(6) }\int_{\partial R'_{b}}\sqrt{1+v'^{2}}\ db=dA'.\]
 Assembling inequalities (2)\textemdash{}(6) yields\[
dA\geq dA',\]
 with equality only if $\rho=1.$ Integration yields\[
|\partial R|\geq\int_{B_{0}}dA\geq\int_{B_{0}}dA'=|\partial R'|\]
because by hypothesis the slices over $B_{0}$ include almost all
of the area of $\partial R'.$ If equality holds, then for almost
all slices, $dA=dA',$ $\rho=1,$ and by the uniqueness hypothesis,
$R_{b}=R'_{b}$ (up to measure 0). Almost all other slices are empty.
Consequently $R=R'$ up to a set of measure zero. \end{proof}
\begin{rem*}
If we assume for example that $\partial R$ and $\partial R'$ are
smooth, then it follows that $R$=$R'$ .
\end{rem*}
The following proposition provides for certain fiber bundles associated
to Riemannian submersions a similar symmetrization in a related warped
product. A \emph{Riemannian submersion} $\pi:M\rightarrow B$ has
the property that $d\pi,$ restricted to the orthogonal complement
of its kernel, is an isometry. It follows that fibers are equidistant
and that locally parallel transport normal to one fiber yields a diffeomorphism
with any nearby fiber, which we assume preserves or scales volume.
\begin{prop}
\label{pro:sym for fiber bundles}\textbf{\emph{Symmetrization for
fiber bundles. }}Consider a smooth Riemannian submersion $M\rightarrow B$
with density $\Phi$ and a smooth warped product $B\times_{g}F$ with
product density $\Phi'$. Assume that geodesic balls about a fixed
point $p$ in $F$ are isoperimetric in every fiber of $B\times_{g}F$,
with no more perimeter than any competitor in the corresponding fiber
of $M$. Further assume that parallel transport normal to the fibers
from $M_{b_{1}}$ to $M_{b_{2}}$ scales volume on the fibers by $\left(g(b_{2})/g(b_{1})\right)^{n}$.
Suppose that $M$ is compact or more generally that: 
\begin{enumerate}
\item $B$ is compact or more generally has positive injectivity radius
and
\item for some $r_{0}>0$, for $r<r_{0}$ the $r$-tube about a fiber $M_{b}$
under parallel translation from that fiber has metric\[
ds^{2}=(1+o(1))(db^{2}+dt^{2})\]
Here db is in the direction of parallel translation; dt, which depends
on b, is the metric along fibers; and o(1) approaches 0 as r approaches
0, uniformly in b.
\end{enumerate}
Let $R$ be a region of finite perimeter. Consider the Schwarz symmetrization
$\mathrm{sym}(R)$ in the warped product $B\times_{g}F$, which replaces
the slice of $R$ in each fiber with a ball about $p$ of the same
volume in the corresponding fiber of $B\times_{g}F$. Then $\mathrm{sym}(R)$
has the same volume and no greater perimeter than $R$.\end{prop}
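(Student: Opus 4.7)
The plan is to follow the strategy of Proposition \ref{pro:sym for warped products} fiber by fiber, but with the extra twist that the ambient space $M$ is only locally modeled on a warped product via parallel transport. The two new ingredients compared to Proposition \ref{pro:sym for warped products} are (a) that parallel transport in $M$ plays the role of the trivial product structure in $B\times_g F$, and (b) that $r$-enlargements in $M$ must be compared to $r$-enlargements in $B\times_g F$ through the tube metric $(1+o(1))(db^2+dt^2)$.

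First I would verify volume preservation. The volume element along a fiber $M_{b_0}$ is transported to nearby fibers $M_b$ with Jacobian $\left(g(b)/g(b_0)\right)^n$ by the scaling hypothesis, which is exactly the warping factor produced by the metric $g(b)^2 dt^2$ in the corresponding fiber of $B\times_g F$. A Fubini-type argument (valid because the Riemannian submersion makes fibers equidistant) then shows that the fiber-wise weighted volume of $R$ in $M$ agrees with the fiber-wise weighted volume of $\mathrm{sym}(R)$ in $B\times_g F$, so the total volumes match.

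Next, to compare perimeters, I would fix a small $r$ and a fiber $M_{b_0}$, write $R(b_0)=R\cap M_{b_0}$, and let $D$ be the ball in the corresponding fiber of $B\times_g F$ of the same weighted volume. In the tube about $M_{b_0}$, hypothesis (2) lets us identify the slice of the $r$-enlargement $(R(b_0))^r$ at level $b$ with, up to $o(1)$ metric distortion, a fiber enlargement $R(b_0)_{r'}$ for some $r'$ independent of the shape of the slice; the same identification holds in $B\times_g F$ for $D_{r'}$. Because balls about $p$ in the fiber of $B\times_g F$ are isoperimetric not only among competitors in that fiber but among competitors in the fiber of $M$, the lower right derivative $dV/dr'$ for $R(b_0)_{r'}$ in $M_{b_0}$ is at least the right derivative $dV/dr'$ for $D_{r'}$ in the fiber of $B\times_g F$. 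Lemma \ref{lem:franks lemma} then gives, slice by slice, $(D)^{r}(b)\subseteq\mathrm{sym}((R(b_0))^r(b))$ up to $o(1)$ error, and unioning over $b_0$ as in the proof of Proposition \ref{pro:sym for warped products} yields
\[
|(\mathrm{sym}(R))^r|\leq|R^r|+o(r),
\]
from which dividing by $r$ and passing to the limit recovers the Minkowski perimeter inequality.

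The main obstacle will be making the $o(1)$ error control rigorous and uniform. The tube metric estimate must hold uniformly in $b_0$, otherwise the accumulated distortion could survive in the limit; this is exactly why the hypothesis demands compactness of $M$ (or positive injectivity radius of $B$ together with the uniform tube bound). A secondary technical point, as in Remark \ref{rem:SphericalSym(singular fibers)}, is that $R$ need only have finite perimeter rather than smooth boundary, so one must either approximate by smooth regions or interpret all enlargement arguments purely in terms of Minkowski content, which is why it was convenient to phrase perimeter that way in the opening definition.
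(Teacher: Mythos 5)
Your proposal follows essentially the same route as the paper's proof: Fubini for the volume, parallel transport of the slice to nearby fibers with the $(g(b_2)/g(b_1))^n$ scaling, the tube-metric hypothesis to absorb the twisting into a uniform $1+o(1)$ distortion, the strengthened isoperimetric hypothesis in the fiber of $M$ together with Lemma \ref{lem:franks lemma} to compare enlargements, and the inclusion of $r$-enlargements to conclude via Minkowski content. The only cosmetic difference is that the argument most naturally delivers $|(\mathrm{sym}(R))^{r}|\leq|R^{r+o(r)}|$ rather than $|R^{r}|+o(r)$, which suffices just as well for the perimeter comparison.
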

\begin{rem*}
For the simplest lens spaces (see Rmk. \ref{rem:LensSpaces}), $M=\mathbb{S}^{3}$
and $M_{b}=F=\mathbb{S}^{1}.$ In general, Gromov \cite[Sect. 9.4]{Gromov- isoperimetry of waists and concentration of maps}
suggests taking $F$ to be $\mathbb{R}$ or $\mathbb{R}^{+}$ with
an appropriate density for which balls about $p=0$ are isoperimetric
to reduce a product of well-understood factors to two dimensions.\end{rem*}
\begin{proof}
The preservation of volume is just Fubini's theorem for Riemannian
submersions.

As in the proof of Proposition \ref{pro:sym for warped products}
denote $r$-enlargements in $M$ by a superscript $r$ and $r$-enlargements
in fibers by a subscript $r$. Let $r$ be a small positive number
less than both $r_{0}$ and the injectivity radius of $B$. Consider
a slice $C=R(b_{0})$ of $R$ and a ball $D$ of the same volume about
the origin in the corresponding fiber of $B\times_{g}F$. For general
$b$, let $C'$ denote the image of $C$ in $M_{b}$ under normal
parallel transport and let $D'$ denote the copy of $D$ in $\left\{ b\right\} \times_{g}R^{n}$.
Such horizontal movement just scales volume, in $M$ by hypothesis
and in $B\times_{g}F$ because the density is by hypothesis a product
density. Therefore $|C'|=|D'|$. As in the proof of Proposition \ref{pro:sym for warped products},
$|D^{r}(b)|=|D'_{r'}|$, but due to the twisting in fiber bundles,
it is not necessarily true that $|C^{r}(b)|=|C'_{r'}|$. By the uniformity
hypothesis (2), the map by parallel transport based at $M_{b_{0}}$
from $B\times_{g}F$ to $M$ distorts the metric by $1+o(1)$, uniform
over $M$. Therefore \[
C'_{r'}\subseteq C^{r+o(r)}(b).\]
Since by hypothesis each $D'_{r'}$ has no more perimeter than any
competitor in the corresponding fiber of $M$, the lower right derivative
$dV/dr$ for the family $C'_{r'}$ is at least as great as the right
derivative $dV/dr$ for the family $D'_{r'}$, and hence the upper
right derivative $dr/dV$ for the family $C'_{r'}$ is no greater
than the right derivative $dr/dV$ for the family $D'_{r'}$. Consequently
by Lemma \ref{lem:franks lemma}, if $C_{r_{1}}$ and $D_{r_{2}}$
have the same volume, $r_{1}\leq r_{2}$ ; conversely, when $r_{1}=r_{2}=r'$,
the volumes satisfy

\[
|D'_{r'}|\leq|C'_{r'}|.\]
Hence \[
D^{r}(b)\subseteq\mathrm{sym}(C^{r+o(r)}(b)).\]
Since this holds for all $b$, \[
D^{r}\subseteq\mathrm{sym}(C^{r+o(r)}).\]
Since this holds for all $b_{0}$,\[
(\mathrm{sym}(R))^{r}=\bigcup_{b_{0}\in R}D^{r}\subseteq\bigcup_{b_{0}\in R}\mathrm{sym}(C^{r+o(r)})\subseteq\mathrm{sym}(R^{r+o(r)}).\]
Consequently,\[
|(\mathrm{sym}(R))^{r}|\leq|R^{r+o(r)}|,\]
and $\mbox{sym}(R)$ has no more perimeter than $R$, as desired.\end{proof}
\begin{rem}
\label{rem:LensSpaces}As in Remark \ref{rem:SphericalSym(singular fibers)},
at least for regions of finite volume, one may allow singular fibers
where the projection fails to be a submersion, as long as the union
$S$ of such fibers has codimension 1 measure 0. An example is the
fibration of $\mathbb{S}^{3}$ in $\mathbb{C}^{2}$ by orbits of the
action mapping $(z_{1},z_{2})$ to $(e^{ik\theta}z_{1},e^{il\theta}z_{2})$.
If $k=l=1$, this is just the smooth Hopf fibration, but for general
integers $k,l$ the orbits through (1, 0) or (0, 1) are singular.
These circle fibrations lift to the lens spaces and generalize to
all odd dimensions. 

Similarly the uniformity hypothesis (1) on $B$ is not necessary.
The second uniformity hypothesis (2) probably is not necessary (see
Prop. \ref{pro:smooth fiber bundles}), but our method of proof seems
to need it. Of course if $R$ is compact then hypotheses 1 and 2 are
not necessary. 

In the unwarped case, including the lens spaces, the $1+o(1)$ factor
in the proof is unnecessary, because distance from a point in a fixed
fiber in the lens space is no greater than its value in the product
(see \cite[Prop. 8.6]{Report}).

Another interesting example is the cone over an $n$-dimensional Riemannian
manifold $M_{0},$ which can be viewed as a warped product $\{b\geq0\}\times_{b^{n}}M_{0}.$
If regions in $M_{0}$ have no less perimeter than balls of the same
volume in $S^{n},$ the cone can be compared to $\{b\geq0\}\times_{b^{n}}\mathbb{R}^{n}=\mathbb{R}^{n+1}$
\cite[§3]{Morgan-In Polytopes Small Balls about Some Vertex Minimize Perimeter }.
\end{rem}
The following proposition relaxes the uniformity hypotheses of Proposition
\ref{pro:sym for fiber bundles} for smooth regions and provides a
framework for the analysis of when equality holds.
\begin{prop}
\label{pro:smooth fiber bundles}\textbf{\emph{Smooth case for fiber
bundles}}. Consider a smooth Riemannian submersion $M\rightarrow B$
with density and a smooth warped product $B\times_{g}F$ with product
density. Assume that geodesic balls about a fixed point $p$ in $F$
are isoperimetric in every fiber of $B\times_{g}F$, with no more
perimeter than any competitor in the corresponding fiber of $M$.
Further assume that parallel transport normal to the fibers from $M_{b_{1}}$
to $M_{b_{2}}$ scales volume on the fibers by $\left(g(b_{2})/g(b_{1})\right)^{n}$.
Let $R$ be a measurable set in $M$. Suppose that its topological
boundary $\partial R$ meets almost every fiber smoothly (or not at
all). Let $R'$ denote its Schwarz symmetrization in the warped product
$B\times_{g}F,$ which replaces the slice of $R$ in each fiber with
a ball about $p$ of the same volume. Suppose that $\partial R'$
also meets almost every fiber smoothly and that its intersection with
other fibers contributes nothing to the area of $\partial R'$. Then
$R'$ has the same volume and no greater perimeter than $R$.\end{prop}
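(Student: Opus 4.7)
The plan is to run the argument of Proposition \ref{pro:uniqueness for warped products} essentially verbatim, with the warped product replaced by the Riemannian submersion $M\rightarrow B$, using the hypothesis on horizontal volume scaling to identify the infinitesimal fiber-comparison step. First, volume preservation follows from Fubini for Riemannian submersions together with the volume-scaling hypothesis, which identifies $|R_b|$ in $M$ with $|R'_b|$ in $\{b\}\times_g F$ for almost every $b$.

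Next, let $B_0\subseteq B$ be the set of $b$ over which both $\partial R$ and $\partial R'$ are smoothly transverse to the fiber; by the transversality hypotheses together with Sard's theorem, almost every $b\in B$ lies in $B_0$. For $b\in B_0$, at each point of $\partial R$ let $0\leq \theta<\pi/2$ be the angle with the horizontal distribution and let $v$ be the horizontal vector in that direction of magnitude $\tan\theta$; define $v'$ analogously on $\partial R'$. As in Proposition \ref{pro:uniqueness for warped products}, the rotational symmetry of balls about $p$ forces $|v'|$ to be constant on each fiber of $B\times_g F$. The key identity is the horizontal flux balance
\[
\int_{\partial R_b}\pm v = \int_{\partial R'_b}v',
\]
which now relies on the volume-scaling hypothesis: since horizontal parallel transport rescales fiber volumes by the same factor $(g(b_2)/g(b_1))^n$ in $M$ as in $B\times_g F$, and since $|R_b|=|R'_b|$, any additional infinitesimal change in fiber volume along $b$ must agree on the two sides and equals the respective horizontal flux across the boundary.

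Third, the area decompositions
\[
dA = \int_{\partial R_b}\sqrt{1+v^2}\, db, \qquad dA' = \int_{\partial R'_b}\sqrt{1+v'^2}\, db
\]
hold for the Riemannian submersion because horizontal vectors are orthogonal to the fibers and $d\pi$ is an isometry on the horizontal distribution, so the Jacobian of the projection onto $B$ restricted to $\partial R$ is exactly $\cos\theta$ after including the density (as a product density on $B\times_g F$ and as $\Phi$ on $M$; the hypothesis that perimeters in fibers of $M$ dominate those in corresponding fibers of $B\times_g F$ packages the comparison of densities on fibers). Applying Jensen's inequality to the convex function $h(x)=\sqrt{1+x^2}$, the sublinearity $h(\rho x)\geq \rho h(x)$ for $0\leq\rho\leq 1$ with $\rho$ the ratio of fiber perimeters in $B\times_g F$ versus $M$ (which is $\leq 1$ by hypothesis), and the constancy of $|v'|$ on each fiber of $\partial R'$, exactly as in inequalities (2)--(6) of the previous proof, yields $dA\geq dA'$. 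Integration over $B_0$ together with the hypothesis that fibers outside $B_0$ contribute nothing to the area of $\partial R'$ gives $|\partial R|\geq |\partial R'|$.

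The main obstacle is justifying the flux equation and area decomposition in the Riemannian submersion setting without the product structure of a warped product. Both rely on the fact that, infinitesimally near a fiber $M_b$, the Riemannian submersion is modeled on the warped product via normal parallel transport: the horizontal direction is orthogonal to the fiber, and by the volume-scaling hypothesis the induced volume form on the fibers differs from that of the corresponding fiber in $B\times_g F$ only by a factor depending on $b$ alone. This is exactly what is needed for the infinitesimal bookkeeping of volume change (for (1)) and of area (for (2)) to transfer from the warped product case without modification.
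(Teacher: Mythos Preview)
Your proof is essentially the paper's own argument: run the proof of Proposition \ref{pro:uniqueness for warped products} verbatim, replacing the product-density hypothesis on $M$ by the horizontal volume-scaling hypothesis in order to justify the flux identity (1). The only noteworthy difference is that the paper records the area decomposition for $\partial R$ merely as the inequality
\[
dA \;\geq\; \int_{\partial R_b}\sqrt{1+v^2}\,db,
\]
attributing any slack to the twisting (non-integrability of the horizontal distribution) in the fiber bundle, whereas you claim equality via a Jacobian argument; since this inequality is in the favorable direction, the conclusion $dA\geq dA'$ follows either way.
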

\begin{rem*}
One may allow singular fibers, as long as the union of such fibers
has codimension 1 measure 0. In cases where there is always twisting,
as in the lens space examples of Remark \ref{rem:LensSpaces}, if
$R'$ has the same perimeter as $R$, then $R$ must be a union of
fibers.\end{rem*}
\begin{proof}
{[}short version{]} The proof is almost identical to the proof of
Proposition \ref{pro:uniqueness for warped products}. We cannot hypothesize
that M has a \emph{product} density, so we have added the hypothesis
that parallel transport scales volume. Equation (2) becomes a (favorable)
inequality due to the twisting in the fiber bundle. Everything else
remains the same.

{[}long version{]} The proof is almost identical to the proof of Proposition
\ref{pro:uniqueness for warped products}. The preservation of volume
is just Fubini's theorem for Riemannian submersions.

Let $B_{0}$ be the set of points $b$ in $B$ for which $\partial R$
and $\partial R'$ are both smoothly transverse to the fiber over
$b$. By hypothesis, almost every fiber meets $\partial R$ and $\partial R'$
smoothly. By Sard's Theorem, almost every fiber meets $\partial R$
and $\partial R'$ transversely. Hence almost all points of $B$ lie
in $B_{0}.$ For now we consider $b\in B_{0}.$ Let $0\leq\theta<\pi/2$
denote the angle that $\partial R$ makes with the horizontal; let
$v$ be the horizontal vector in that direction with magnitude $\tan\theta.$
The analogous $v'$ for $\partial R'$ has constant magnitude in each
fiber. For every $b$ the slices $R_{b}$ and $R'_{b}$ have the same
volume. Horizontal movement just scales volume, in $M$ by hypothesis
and in $B\times_{g}F$ because by hypothesis the density is a product
density. If we vary b, any additional change in volume is due to $v.$
Since corresponding slices have the same volumes, these changes must
be the same in $M$ and in $B\times_{g}F$:

\[
\mbox{(1)}\int_{\partial R_{b}}\pm v=\int_{\partial R'_{b}}v',\]
where the $\ensuremath{\pm}$ sign depends on the local orientation
of $\partial R_{b}$ and the integrals include the densities. The
element of area $dA$ of $\partial R$ satisfies

\[
\mbox{(2) }dA\geq\int_{\partial R{}_{b}}\sqrt{1+v^{2}}\ db,\]
the inequality due to any twisting in the fiber bundle; here we are
including the density in the integral over $\partial R{}_{b}$, not
in $db$. By Jensen's Theorem and the convexity of the function $h(x)=\sqrt{1+x^{2}},$

\[
\mbox{(3) }\hat{\int}_{\partial R_{b}}\sqrt{1+v^{2}}\geq\sqrt{1+\left(\hat{\int}_{\partial R_{b}}|v|\right)^{2}},\]
where the caret over the integral sign indicates normalization by
the measure of the domain of integration. Let $\rho$ denote the ratio
of the areas of $\partial R'_{b}$ and $\partial R_{b}$. By hypothesis
$\rho\leq1.$ By (1),

\[
\mbox{(4) }\sqrt{1+\left(\hat{\int}_{\partial R_{b}}|v|\right)^{2}}\geq\sqrt{1+\left(\rho\hat{\int}_{\partial R'_{b}}|v'|\right)^{2}}\geq\rho\sqrt{1+\left(\hat{\int}_{\partial R'_{b}}|v'|\right)^{2}},\]
because $h(x)=\sqrt{1+x^{2}}$ satisfies $h(\rho x)\geq\rho h(x)$
for any $0\leq\rho\leq1$, with equality only if $\rho=1$. Since
$|v'|$ is constant on $\partial R'_{b},$ 

\[
\mbox{(5) }\sqrt{1+\left(\hat{\int}_{\partial R'_{b}}|v'|\right)^{2}}=\hat{\int}_{\partial R'_{b}}\sqrt{1+v'^{2}}.\]
Finally,\[
\mbox{(6) }\int_{\partial R'_{b}}\sqrt{1+v'^{2}}\ db=dA';\]
 again here for consistency the density is included in the integral
over $\partial R'_{b}$, not in $db$. Assembling inequalities (2)\textemdash{}(6)
yields\[
dA\geq dA',\]
 with equality only if $\rho=1.$ Integration yields\[
|\partial R|\geq\int_{B_{0}}dA\geq\int_{B_{0}}dA'=|\partial R'|\]
because by hypothesis the slices over $B_{0}$ include almost all
of the area of $\partial R'.$\end{proof}

\author{\noindent Frank Morgan, Department of Mathematics and Statistics, }

\author{\noindent Williams College, Williamstown, MA 01267}

\emph{E-mail address}: Frank.Morgan@williams.edu
\\

\author{\noindent Sean Howe, Department of Mathematics,}

\author{\noindent University of Arizona, Tucson, AZ 85721}

\emph{E-mail address}: seanpkh@gmail.com
\\

\author{\noindent Nate Harman, Department of Mathematics and Statistics,}

\author{\noindent University of Massachusetts, Amherst, MA 01003}

\emph{E-mail address}: nateharman1234@yahoo.com

\begin{thebibliography}{RCBM}
\bibitem[BC]{Benjamini}Benjamini, Itai; Cao, Jinguo. A new isoperimetric
theorem for surfaces of variable curvature, Duke Math. J. 85 (1996),
359-396.

\bibitem[BZ]{Burago and Zalgaller- Geometric Inequalities}Burago,
Yu. D.; Zalgaller, V.A. \emph{Geometric Inequalities, }Springer-Verlag,
1980.

\bibitem[C]{Chavel-Riemannian Geometry}Chavel, Isaac. \emph{Riemannian
Geometry: A Modern Introduction, }Cambridge Studies in Advanced Mathematics
98. Cambridge University Press, 2006.

\bibitem[CCF]{Chlebik et al-The perimeter inequality under Steiner symmetrization}Chlebík,
Miroslav; Cianchi, Andrea; Fusco, Nicola. The perimeter inequality
under Steiner symmetrization: Cases of equality, Ann. of Math. 162
(2005), 525\textendash{}555. 

\bibitem[DHHT]{Report}Díaz, Alexander; Harman, Nate; Howe, Sean;
Thompson, David. SMALL Geometry Group 2009 Report, Williams College,
http://www.williams.edu/Mathematics/fmorgan/G09.pdf.

\bibitem[F]{Federer-GMT}Federer, Herbert. \emph{Geometric Measure
Theory, }Die Grundlehren der mathmatischen Wissenschaften in Einzelfarstellungen
Band 153, 1969.

\bibitem[G1]{Gromov- isoperimetry of waists and concentration of maps}
Gromov, M. Isoperimetry of waists and concentration of maps, Geom.
Funct. Anal. 13 (2003), 178\textendash{}215.

\bibitem[G2]{Gromov Spectral} Gromov, Mikhael. Spectral geometry
of semi-algebraic sets, Ann. Inst. Fourier (Grenoble) 42 (1992), 249-274. 

\bibitem[M1]{Morgan - GMT}Morgan, Frank. \emph{Geometric Measure
Theory: a Beginner\textquoteright{}s Guide}, 4th ed., Academic Press,
London, 2009.

\bibitem[M2]{Morgan-In Polytopes Small Balls about Some Vertex Minimize Perimeter }Morgan,
Frank. In polytopes, small balls about some vertex minimize perimeter,
Geom. Anal. 17 (2007), 97-106.

\bibitem[M3]{Morgan- Isoperimetric estimates on products}Morgan,
Frank. Isoperimetric estimates on products, Ann. Global Anal. Geom.
110 (2006), 73-79.

\bibitem[MHH]{MHH}Morgan, Frank; Howards, Hugh; Hutchings, Michael.
The isoperimetric problem on surfaces of revolution of decreasing
Gauss curvature, Trans. AMS 352 (2000), 4889-4909.

\bibitem[R1]{Ros-isoperimetric problem}Ros, Antonio. The isoperimetric
problem, \emph{Global Theory of Minimal Surfaces} (Proc. Clay Math.
Inst. Summer School, 2001), Amer. Math. Soc., Providence, RI (2005),
175-207.

\bibitem[R2]{Ros-isoperimetric problem for lens spaces}Ros, Antonio.
The isoperimetric problem for lens spaces, in preparation.

\bibitem[RCBM]{Rosales et al - On the Isoperimetric Problem in Euclidean Space with Density}Rosales,
César; Cañete, Antonio; Bayle, Vincent; Morgan, Frank. On the isoperimetric
problem in Euclidean space with density, Calc. Var. PDE 31 (2008),
27-46.

\end{thebibliography}
\end{document}